\newtheorem{definition}{Definition}
\newtheorem{theorem}{Theorem}
\newtheorem{proposition}{Proposition} 
\newtheorem{lemma}{Lemma}
\newtheorem{example}{Example}
\newtheorem{corollary}{Corollary}
\newtheorem{remark}{Remark}
\newcommand{\D}[0]{\mathrm{d}}
\begin{document}
\begin{frontmatter}



\title{A new summability method for divergent series}


\author{Ibrahim M. Alabdulmohsin}

\address{King Abdullah University of Science and Technology (KAUST),\\
Computer, Electrical and Mathematical Sciences \& Engineering Division\\ 
Thuwal, 23955-6900, Saudi Arabia}

\begin{abstract}
The theory of summability of divergent series is a major branch of mathematical analysis that has found important applications in engineering and science. It addresses methods of assigning natural values to divergent sums, whose prototypical examples include the Abel summation method, the Ces\'aro means, and Borel summability method. In this paper, we introduce a new summability method for divergent series, and derive an asymptotic expression to its error term. We show that it is both regular and linear, and that it arises quite naturally in the study of local polynomial approximations of analytic functions. Because the proposed summability method is conceptually simple and can be implemented in a few lines of code, it can be quite useful in practice for  \emph{computing} the values of {divergent} sums. 

\end{abstract}

\begin{keyword}
Summability theory \sep Divergent series \sep Abel summation  \sep Ces\'aro summability

\PACS 40C05 \sep 65B10 \sep 65B15
\end{keyword}

\end{frontmatter}

\section{Introduction}\label{intro}
At the turn of the 18\textsuperscript{th} century, a mathematician named Guido Grandi (1671-1742) proposed a mathematical argument for creationism. He looked into the divergent series $\sum_{k=0}^\infty (-1)^k$ and argued that its value was both equal to zero and to non-zero \emph{simultaneously} because both statements could be equally justified. On one hand, grouping terms into $(1-1)+(1-1)+\cdots$ suggested a value of zero for the infinite sum. On the other hand, the series itself arose in a well-known Maclaurin expansion of the function $f(x)=(1+x)^{-1}$, which suggested a value of $1/2$ for the infinite sum. Since the same mathematical object could be equally assigned a zero value (nothing) and a non-zero value (something), Grandi argued that creation out nothing was mathematically justifiable \citep{kline1983euler,kowalenko2011euler}. Nearly a century later, Niels Abel would describe divergent series as the \lq\lq work of the devil'' and declare it \lq\lq shameful'' for any mathematician to base any argument on them \citep{HardyDiverg,kowalenko2011euler,tucciarone1973development,varadarajan2007euler}. 

Divergent series are an inconvenient truth. They arise quite frequently in many branches of mathematics and sciences, such as asymptotic analysis, analytic number theory, Fourier analysis,  quantum field theory, and dynamical systems \citep{caliceti2007useful,kowalenko2011euler,rubel1989editor,tucciarone1973development,varadarajan2007euler}, but their divergence makes them difficult to interpret and analyze. As a result, they stirred an intense debate in the mathematical community for many centuries. Throughout the 17\textsuperscript{th} and 18\textsuperscript{th} centuries, for instance, mathematicians used divergent series regularly, albeit cautiously. They advocated a \emph{formal} interpretation, where a series  generally assumed the value of the algebraic expression from which it was derived \citep{ferraro1999first,varadarajan2007euler}. Euler, in particular, believed that every series had a unique value, and that divergence was a mere \emph{artificial} limitation \citep{ferraro1999first,HardyDiverg,kline1983euler,varadarajan2007euler}. In the 19\textsuperscript{th} century, however, the advent of mathematical rigor had led the most prominent mathematicians of the time, such as Cauchy and Weierstrass, to forbid the use of divergent series entirely. Consequently, little (if any) work was published on divergent series from 1830 to 1880 \citep{ferraro1999first,HardyDiverg,kline1983euler,kowalenko2011euler,tucciarone1973development,varadarajan2007euler}.

Around the turn of the 20\textsuperscript{th} century, nevertheless, a Hegelian synthesis between the two opposing views came to light. It was initiated by Ernesto Ces\'aro (1859-1906), who provided the first modern definition of divergent series that placed their study on a rigorous footing \citep{ferraro1999first,HardyDiverg,tucciarone1973development}. Ces\'aro's definition was an averaging method, which was later generalized independently by N\"orlund and Voronoi \citep{HardyDiverg}. Ramanujan would share a similar view later by describing the value of an infinite sum as a \lq\lq center of gravity'' \citep{Berndt6}. The study of divergent series included contributions from prominent mathematicians such as Frobenius, Borel, Hardy, Ramanujan, and Littlewood, and the name \emph{summability theory} was coined to denote this emerging branch of mathematical analysis.

Perhaps, the most fundamental question in summability theory is how to \emph{interpret} divergent series, such as the Grandi series mentioned earlier. One natural method, first postulated in a limited form by Hutton and later expanded by H\"older, Ces\'aro, N\"orlund, and Voronoi among others, is the \emph{averaging} interpretation. For instance, in one of the earliest and simplest definitions, the \emph{limit} of a sequence $(s_k)_{k=1,2,\ldots}$ is \emph{defined} by the limit of the sequence of averages $(s_k+s_{k+1})/2$. This definition, often referred to as Hutton's summability method, is a reasonable definition of divergent sums because it possesses many useful properties. First, it is \emph{regular}, i.e. if the original sequence $(s_k)_{k=1,2,\ldots}$  had a limit, then the new definition also agrees with that limit. It is also \emph{linear} and \emph{stable} (see \citep{HardyDiverg} for a definition of those terms). Importantly, it has a non-trivial \emph{power} because it can assign values to some series that do not converge in the classical sense of the word. For example, applying this summability method to the Grandi series $1-1+1-1+\cdots$ yields a value of $\frac{1}{2}$, which is the same value assigned to this series using countless other arguments. 

Besides the \emph{averaging} interpretation, some summability methods were proposed that appeal to \emph{continuity} as an argument for defining divergent series. The most prototypical example in this regard is the Abel summation method, which was named after Abel's continuity theorem \citep{HardyDiverg,tucciarone1973development}. It assigns to a series $\sum_{k=0}^\infty a_k$ the value $\lim_{x\to 1^-}\,\sum_{k=0}^\infty a_kx^k$ if the limit exists. Euler used this approach quite extensively and called it \lq\lq the generating function method'' \citep{kowalenko2011euler,varadarajan2007euler}. Poisson also used it frequently in summing Fourier series \citep{HardyDiverg,tucciarone1973development}. For the Grandi series, this approach also yields a value of $\frac{1}{2}$. This agreement between the Abel summation method and averaging methods is not a coincidence. In fact, it can be shown that the Abel summation method is itself an averaging method, and that it is always consistent with, but is more powerful than, all N\"orlund means, including Ces\'aro and Hutton's summability methods \citep{HardyDiverg,tauberianTheory}.

Other major approaches for defining divergent series have been proposed and are used extensively in the literature as well. These include definitions that were originally conceived of as methods for accelerating series convergence, such as the Euler summation method \citep{caliceti2007useful,Cohen2000,HardyDiverg,lagarias2013euler}.  They also include methods that are based on analytic continuation, such as the zeta function regularization method. Moreover, they include the Mittag-Leffler summability method as well as the Lindel\"of summability method, both summing power series in the Mittag-Leffler star of the corresponding function \citep{HardyDiverg}. Summability methods such as the latter ones, which assign to a power series the value of the generating function, are called \emph{analytic} \citep{moroz1990novel}.

In this paper, a new summability method for divergent series is introduced. We will show how it arises quite naturally in the study of local polynomial approximations, and derive an asymptotic expression to its error term. In particular, we will prove that if it converges, then it converges algebraically (logarithmically). When applied to the geometric series $\sum_{k=0}^\infty x^k$ with $x\in\mathbb{R}$, we will prove that the new summability method assigns to this series the value $(1-x)^{-1}$ as long as $-\kappa<x<1$, where $\kappa\approx 3.5911$ is the solution to $\kappa\log\kappa -\kappa = 1$. For other values of $x$ that lie outside this domain, it does not assign any value to the geometric series. Because $\kappa>1$, the proposed summability method is strictly stronger than the Abel summation method. Also, we will show that it is a linear regular matrix summability method that can also be interpreted as an averaging method. It is conceptually quite simple and can be implemented in a few lines of code. Hence, it can be quite useful in practice for \emph{computing} the values of {divergent} sums. Finally, we will demonstrate how the proposed summability method works quite well, even for asymptotic series that diverge everywhere such as the Euler-Maclaurin summation formula. 

\section{The Summability Method} 
We will begin by introducing the new summability method, and show how it arises quite naturally in the study of local polynomial approximations. We analyze the summability method afterward.

\subsection{Preliminaries}
\begin{definition}[The $\chi$-Sum]\label{chi_sum_def}
Let $(a_k)_{k=0,1,2,\ldots}\in\mathbb{C}^\infty$ be an infinite sequence of complex numbers. Then, the sequence $(a_k)_{k=0,1,2,\ldots}$ will be called $\chi$-summable if the following limit exists:
\begin{equation}\label{chi_sum_def}
V=\lim_{n\to\infty}\;\Big\{\sum_{k=0}^n \chi_n(k)\,a_k\Big\},
\end{equation}
where $\chi_n(k) = \prod_{j=1}^k \big(1-\frac{j-1}{n}\big)$ and $\chi_n(0)=1$. In addition, the limit $V$, if it exists, will be called the $\chi$-sum of the infinite sequence $(a_k)_{k=0,1,2,\ldots}$.
\end{definition}

\begin{remark}
With some abuse of terminology, we will sometimes say that the series $\sum_{k=0}^\infty a_k$ is $\chi$-summable when we actually mean that the sequence $(a_k)_{k=1,2,\ldots}$ is $\chi$-summable according to Definition \ref{chi_sum_def}.
\end{remark}

\begin{definition}[The $\chi$-Limit]\label{chi_seq_limit_def}
Let $(s_k)_{k=0,1,2,\ldots}\in\mathbb{C}^\infty$ be an infinite sequence of complex numbers. Then, the $\chi$-limit of the sequence $(s_k)_{k=1,2,\ldots}$ is defined by the following limit if it exists:
\begin{equation}
\lim_{n\to\infty} \Big\{\frac{\sum_{k=0}^n\,p_n(k)\,s_k}{\sum_{k=0}^n p_n(k)}\Big\}, 
\end{equation} 
where $p_n(k)=k\,\chi_n(k)$.
\end{definition}

Later, we will show that both Definition \ref{chi_sum_def} and Definition \ref{chi_seq_limit_def} are equivalent to each other. That is, the $\chi$-limit of the sequence of partial sums:
\begin{equation*}
\big(\sum_{k=0}^0\,a_k, \,\sum_{k=0}^1\,a_k,\,\sum_{k=0}^2\,a_k,\ldots \big)
\end{equation*}
is equal to the $\chi$-sum of the infinite sequence $(a_k)_{k=1,2,\ldots}$. This shows that the summability method in Definition \ref{chi_sum_def}, henceforth referred to as $\chi$, is indeed an averaging method, and allows us to prove additional useful properties. Before we do that, we first show how the summability method $\chi$ arises quite naturally in local polynomial approximations. 

\subsection{Summability and the Taylor Series Approximation}\label{sum_taylor_derivation}
As a starting point, suppose we have a function $f(x)$ that is \emph{n}-times differentiable at a particular point $x_0$ and let $h\ll 1$ be a small chosen small step size such that we wish to approximate the value of the function $f(x_0 + nh)$ for an arbitrary value of $n$ using \emph{solely} the local information about the behavior of $f$ at $x_0$. Using the notation $f_j^{(k)} \doteq f^{(k)}(x_0+jh)$ to denote the $k$-th derivative of $f$ at the point $x_0+jh$, we know that the following approximation holds when $h\ll 1$:
\begin{equation}\label{Claim411_1} 
f_j^{(k)} \approx f_{j-1}^{(k)} + f_{j-1}^{(k+1)}h
\end{equation}
In particular, we obtain the following approximation that can be held with an arbitrary accuracy for a sufficiently small step size $h$: 
\begin{equation}\label{Claim411_2} 
f_n \approx f_{n-1} + f_{n-1}^{(1)}h
\end{equation} 
However, the approximation in (\ref{Claim411_1}) can now be applied to the right-hand side of (\ref{Claim411_2}). In general, we can show by induction that a repeated application of (\ref{Claim411_2}) yields the following general formula:
\begin{equation}\label{Claim411_4}
f_n \approx \sum_{k=0}^n \binom{n}{k} f_0^{(k)} h^k
\end{equation}

To prove that (\ref{Claim411_4}) holds, we first note that a base case is established for $n=1$ in  (\ref{Claim411_2}). Suppose that it holds for $n < m$, we will show that this inductive hypothesis implies that (\ref{Claim411_4}) also holds for $n = m$. First, we note that if (\ref{Claim411_4}) holds for $n<m$, then we have:
\begin{equation}\label{Claim411_5}
f_m \approx f_{m-1} + f_{m-1}^{(1)} h \approx \sum_{k=0}^{m-1} \binom{m-1}{k} f_0^{(k)} h^k + \sum_{k=0}^{m-1} \binom{m-1}{k} f_0^{(k+1)} h^{k+1}
\end{equation}
In (\ref{Claim411_5}), the second substitution for $f_{m-1}^{(1)}$ follows from the same inductive hypothesis because $f_{m-1}^{(1)}$ is simply another function that can be approximated using the same inductive hypothesis. Equation (\ref{Claim411_5}) can, in turn, be rewritten as:
\begin{equation}\label{Claim411_6}
f_m \approx \sum_{k=0}^{m-1} \Big[\binom{m-1}{k} + \binom{m-1}{k-1}\Big] f_0^{(k)} h^k +f_0^{(m)} h^m
\end{equation}
Upon using the well-known recurrence relation for binomial coefficients, i.e. Pascal's rule, we obtain (\ref{Claim411_4}), which is precisely what is needed in order to complete the proof by induction of that approximation. 

In addition, (\ref{Claim411_4}) can be rewritten as given in (\ref{Claim411_7}) below using the substitution $x=x_0 + nh$.
\begin{equation}\label{Claim411_7}
f(x) \approx \sum_{k=0}^n \binom{n}{k} f^{(k)}(x_0) \frac{(x-x_0)^k}{n^k}
\end{equation}
Next, the binomial coefficient can be expanded, which yields:
\begin{equation}\label{claim411_final_eq}
f(x) \approx\sum_{k=0}^n \chi_n(k)\,\frac{f^{(k)}(x_0)}{k!} (x-x_0)^k
\end{equation}

Since $x=x_0 + nh$, it follows that increasing $n$ while holding $x$ fixed is equivalent to choosing a smaller step size $h$. Because the entire proof is based solely on the linear approximation recurrence given in (\ref{Claim411_1}), which is an asymptotic relation as $h\to 0$, the approximation error in (\ref{claim411_final_eq}) will typically vanish as $n\to\infty$. Intuitively, this holds because the summability method $\chi$ uses a sequence of first-order approximations for the function $f(x)$ in the domain $[x_0,\, x]$, which is similar to the Euler method. However, contrasting the expression in (\ref{claim411_final_eq}) with the classical Taylor series expansion for $f(x)$ gives rise to the $\chi$ summability method in Definition \ref{chi_sum_def}. 

Interestingly, the summability method $\chi$ can be stated succinctly using the language of \emph{symbolic methods}. Here, if we let $D$ be the \emph{differential operator}, then Definition \ref{chi_sum_def} states that:
\begin{equation}\label{summMethodCFD}
f(x)=\lim_{n\to\infty} \Big(1+\frac{xD}{n}\Big)^n,
\end{equation}
where the expression is interpreted by expanding the right-hand side into a power series of $x$ and $D^k$ is interpreted as $f^{(k)}(0)$. On the other hand, a Taylor series expansion states that $f(x)=e^{xD}$. Of course, both expressions are equivalent if $D$ were an ordinary number, but they differ when $D$ is a functional operator, as illustrated above.

\subsection{Properties of the Summability Method} 
Next, we prove that both Definition \ref{chi_sum_def} and Definition \ref{chi_seq_limit_def} are equivalent to each other. In particular, the summability method  $\chi$ can be interpreted as an averaging method on the sequence of partial sums. In addition, we prove that it is both regular and linear.

\begin{proposition}\label{XiLimitSeq}
Let $(s_k)_{k=0,1,\ldots}\in\mathbb{C}^\infty$ be a sequence of partial sums $s_k = \sum_{j=0}^k\,a_j$ for some $(a_k)_{k=0,1,2,\ldots}\in\mathbb{C}^\infty$. Then the $\chi$-limit of $(s_k)_{k=1,2,\ldots}$ given by Definition \ref{chi_seq_limit_def} is equal to the $\chi$-sum of $(a_k)_{k=1,2,\ldots}$ given by Definition \ref{chi_sum_def}. 
\end{proposition}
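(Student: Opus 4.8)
The plan is to manipulate the ratio defining the $\chi$-limit of the partial sums until it becomes literally the finite sum $\sum_{k=0}^n \chi_n(k)\,a_k$ appearing in the $\chi$-sum, plus a term that vanishes in the limit. First I would substitute $s_k = \sum_{j=0}^k a_j$ into the numerator $\sum_{k=0}^n p_n(k)\,s_k$ with $p_n(k)=k\,\chi_n(k)$, and swap the order of summation to write it as $\sum_{j=0}^n a_j\,\big(\sum_{k=j}^n k\,\chi_n(k)\big)$. So the key computation is to evaluate the inner ``tail weight'' $W_n(j) \doteq \sum_{k=j}^n k\,\chi_n(k)$, and likewise the denominator $\sum_{k=0}^n p_n(k) = W_n(0)$.

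The main obstacle — and really the only substantive step — is finding a closed form for $W_n(j)$. The natural tool is a telescoping identity for $k\,\chi_n(k)$. Since $\chi_n(k) = \prod_{i=1}^k(1-\tfrac{i-1}{n})$, one has the recurrence $\chi_n(k+1) = \chi_n(k)\big(1-\tfrac{k}{n}\big)$, hence $k\,\chi_n(k) = n\big(\chi_n(k)-\chi_n(k+1)\big)$. This is exactly a telescoping form: $\sum_{k=j}^n k\,\chi_n(k) = n\sum_{k=j}^n\big(\chi_n(k)-\chi_n(k+1)\big) = n\big(\chi_n(j) - \chi_n(n+1)\big)$. But $\chi_n(n+1)$ contains the factor $\big(1-\tfrac{n}{n}\big)=0$, so $\chi_n(n+1)=0$, giving the clean result $W_n(j) = n\,\chi_n(j)$. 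In particular the denominator is $W_n(0) = n\,\chi_n(0) = n$.

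Plugging these back in, the ratio in Definition~\ref{chi_seq_limit_def} becomes
\begin{equation*}
\frac{\sum_{k=0}^n p_n(k)\,s_k}{\sum_{k=0}^n p_n(k)} = \frac{\sum_{j=0}^n a_j\, n\,\chi_n(j)}{n} = \sum_{j=0}^n \chi_n(j)\,a_j .
\end{equation*}
Thus the $n$-th term of the $\chi$-limit sequence of $(s_k)$ coincides \emph{exactly} — not merely asymptotically — with the $n$-th term of the $\chi$-sum of $(a_k)$. Therefore one limit exists if and only if the other does, and they are equal, which is the claim. I would close by remarking that the identity $k\,\chi_n(k) = n(\chi_n(k)-\chi_n(k+1))$ is the structural reason the method is an averaging (Nörlund-type) method, since it shows $p_n(k)$ sums telescopically; no delicate estimates or Tauberian arguments are needed here, everything is an exact finite identity.
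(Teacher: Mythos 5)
Your proof is correct, and it establishes exactly the identity the paper's argument is built on --- that for every $n$ the weighted average $\frac{\sum_{k=0}^n p_n(k)s_k}{\sum_{k=0}^n p_n(k)}$ equals $\sum_{k=0}^n \chi_n(k)\,a_k$ \emph{exactly}, so the two limits trivially coincide --- but you reach it by a cleaner route. The paper first proves the normalization $\sum_{k=0}^n k\,\chi_n(k)=n$ through a separate factorial manipulation, then writes the coefficient of $a_k$ as $c_n(k)=\frac{1}{n}\sum_{m=k}^n p_n(m)$ and verifies $c_n(k)=\chi_n(k)$ by induction via the recurrence $c_n(m)=c_n(m-1)-p_n(m-1)/n$. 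Your telescoping identity $k\,\chi_n(k)=n\big(\chi_n(k)-\chi_n(k+1)\big)$ is precisely that recurrence recast as a closed form: it yields the tail sums $\sum_{k=j}^n k\,\chi_n(k)=n\,\chi_n(j)$ in one stroke (using $\chi_n(n+1)=0$), and the paper's normalization lemma drops out as the special case $j=0$, so you dispense with both the induction and the separate factorial computation. The only inessential slip is your closing remark that this exhibits $\chi$ as a N\"orlund-type averaging method: the paper explicitly points out that $\chi$ is \emph{not} a N\"orlund mean, since the weights $p_n(k)$ depend on $n$; this does not affect the validity of your proof.
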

\begin{proof} 
Before we prove the statement of the proposition, we first prove the following useful fact: 
\begin{equation}\label{lemmaSumXinEq}
\sum_{k=0}^n k\, \chi_n(k) = n
\end{equation} 
This holds because:
\begin{align*}
\sum_{k=0}^n k\, \chi_n(k) &= n! \sum_{k=0}^n \frac{k}{(n-k)!\, n^k} = \frac{n!}{n^n} \sum_{k=0}^n \frac{n-k}{k!} n^k \\
&=n! \big(\frac{n}{n^n} \sum_{k=0}^n \frac{n^k}{k!} -\frac{n}{n^n}  \sum_{k=0}^n \frac{n^k}{k!} + \frac{1}{(n-1)!}\big)\\
&=\frac{n!}{(n-1)!} = n
\end{align*}

Next, we prove the proposition by induction. First, let us write $c_n(k)$ to denote the sequence of terms that satisfies: 
\begin{equation}\label{XiLimitSeq_1} 
\frac{p_n(0) s_0 + p_n(1) s_1+\dotsm + p_n(n) s_n}{\sum_{k=0}^n p_n(k)} = \sum_{k=0}^n c_n(k)\, a_k
\end{equation}
Again, we have $s_j = \sum_{k=0}^j a_k$. Our objective is to prove that $c_n(k)=\chi_n(k)$. To prove this by induction, we first note that $\sum_{k=0}^n p_n(k) = n$ by (\ref{lemmaSumXinEq}), and a base case is already established since $c_n(0)=1=\chi_n(0)$. Now, we note that: 
\begin{equation}\label{XiLimitSeq_2}
c_n(k)=\frac{p_n(k)+p_n(k+1) + \dotsm + p_n(n)}{n}
\end{equation}
For the inductive hypothesis, we assume that $c_n(k)=\chi_n(k)$ for $k<m$. To prove that this inductive hypothesis implies that $c_n(m)=\chi_n(m)$, we note by (\ref{XiLimitSeq_2}) that the following identity holds: 
\begin{equation}\label{XiLimitSeq_3}
c_n(m)=c_n(m-1)-\frac{p_n(m-1)}{n} =(1-\frac{m-1}{n}) \chi_n(m-1)=\chi_n(m)
\end{equation}
Here, we have used the inductive hypothesis and the original definition of $\chi_n(m)$. Therefore, we indeed have: 
\begin{equation}\label{XiLimitSeq_4}
\frac{p_n(0) s_0 + p_n(1) s_1+\dotsm + p_n(n) s_n}{\sum_{k=0}^n p_n(k)} = \sum_{k=0}^n \chi_n(k)\, a_k,
\end{equation}
which proves the statement of the proposition.
\end{proof}

Proposition \ref{XiLimitSeq} shows that the summability method $\chi$ is indeed {an averaging method} but it is different from the N\"orlund and the Ces\'aro means because the terms of the sequence $p_n(k)$ depend on $n$. In fact, whereas N\"orlund and Ces\'aro means are strictly weaker than the Abel summation method \citep{HardyDiverg}, we will show later that some divergent series, which are not Abel summable, have a well-defined value according to the summability method $\chi$. Therefore, the summability method $\chi$ is not contained in the N\"orlund and the Ces\'aro means. 

Next, Proposition \ref{XiLimitSeq} allows us to prove the following important statement.

\begin{corollary}\label{XiregStabLinear}
The summability method $\chi$ is linear and regular. 
\end{corollary}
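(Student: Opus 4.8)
Linearity is immediate: the χ-sum is defined as a limit of finite linear combinations $\sum_{k=0}^n \chi_n(k)\,a_k$ of the terms $a_k$, with coefficients $\chi_n(k)$ that do not depend on the sequence $(a_k)$. Since ordinary limits are linear, if $(a_k)$ has χ-sum $A$ and $(b_k)$ has χ-sum $B$, then for scalars $\alpha,\beta$ the sequence $(\alpha a_k + \beta b_k)$ has χ-sum $\alpha A + \beta B$. I would state this in one or two sentences and move on.

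For regularity, the natural route is to invoke the Silverman–Toeplitz theorem via the averaging representation from Proposition 1. By that proposition, the χ-sum of $(a_k)$ equals the χ-limit of the partial sums $(s_k)$, i.e. $\lim_n \sum_{k=0}^n c_n(k)\,s_k$ where $c_n(k)=\chi_n(k)$ are nonnegative and, crucially, satisfy $\sum_{k=0}^n c_n(k)\,s_k$ with row sums... wait — here I must be careful: the χ-limit in Definition 2 is $\big(\sum_k p_n(k)s_k\big)/\big(\sum_k p_n(k)\big)$, and Proposition 1 shows this equals $\sum_{k=0}^n \chi_n(k)\,a_k$, not an average of the $s_k$ with weights summing to $1$. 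So for regularity I actually want to check the Toeplitz conditions directly on the matrix $T_{n,k}=\chi_n(k)$ acting on $(a_k)$ — but that is the wrong object, since regularity means: if $s_k \to L$ then the χ-limit is $L$. So the matrix to test is $M_{n,k} = p_n(k)/\sum_j p_n(j) = k\,\chi_n(k)/n$ acting on the sequence $(s_k)$. The three Silverman–Toeplitz conditions are then: (i) $\sum_{k=0}^n M_{n,k} = 1$ for every $n$, which holds by the identity $\sum_{k=0}^n k\,\chi_n(k) = n$ proved inside Proposition 1; (ii) $\sup_n \sum_{k=0}^n |M_{n,k}| < \infty$, which is automatic since $M_{n,k}\ge 0$ and the row sums equal $1$; (iii) $\lim_{n\to\infty} M_{n,k} = 0$ for each fixed $k$, i.e. $\lim_{n\to\infty} \frac{k}{n}\chi_n(k) = 0$, which is clear because for fixed $k$ we have $\chi_n(k)=\prod_{j=1}^k(1-\frac{j-1}{n}) \to 1$ while the prefactor $k/n \to 0$.

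So the plan is: (1) dispatch linearity from the definition; (2) recall from Proposition 1 that the χ-sum of $(a_k)$ is the χ-limit of the partial sums, realized as the matrix transform $s \mapsto Ms$ with $M_{n,k}=k\,\chi_n(k)/n$; (3) verify the three Silverman–Toeplitz (regular matrix) conditions for $M$, using the row-sum identity $\sum_{k=0}^n k\,\chi_n(k)=n$ from the proof of Proposition 1 for condition (i), nonnegativity for condition (ii), and the elementary limit $\chi_n(k)\to 1$ for fixed $k$ for condition (iii); (4) conclude that $s_k \to L$ implies the χ-limit equals $L$, hence χ is regular.

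The main obstacle is essentially bookkeeping rather than depth: one must be careful that regularity is a statement about the averaging matrix $M_{n,k}=p_n(k)/\sum_j p_n(j)$ applied to the partial sums $(s_k)$ — this is why Proposition 1 is needed as the bridge — and not a naive statement about the coefficients $\chi_n(k)$ applied to $(a_k)$, whose row sums are not $1$. Once the correct matrix is identified, all three Toeplitz conditions follow from facts already in hand, so the proof is short.
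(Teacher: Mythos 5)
Your proposal is correct and follows essentially the same route as the paper: linearity directly from the limit laws, and regularity via the Toeplitz--Schur (Silverman--Toeplitz) theorem applied, through Proposition \ref{XiLimitSeq}, to the matrix $A_{n,k}=k\,\chi_n(k)/n$ acting on the partial sums, with the three conditions verified exactly as you describe (row sums equal $1$ by $\sum_{k=0}^n k\,\chi_n(k)=n$, nonnegativity, and $k\,\chi_n(k)/n\to 0$ for fixed $k$). Your mid-plan correction about which matrix to test is precisely the point the paper's proof also relies on, so there is nothing to add.
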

\begin{proof} 
To show that the summability method is linear, we note using the basic laws of limits that: 
\begin{equation*}
\lim_{n\to\infty} \Big\{\sum_{j=0}^n \chi_n \big(\alpha_j + \lambda\, \beta_j\big) \Big\} = \lim_{n\to\infty} \Big\{\sum_{j=0}^n \chi_n(j)\, \alpha_j \Big\} +\lambda\, \lim_{n\to\infty} \Big\{\sum_{j=0}^n \chi_n(j)\, \beta_j \Big\}
\end{equation*}
Hence, $\chi:\,\mathbb{C}^\infty\to \mathbb{C}$ is a linear operator. To show regularity, we use the Toeplitz-Schur Theorem. The Toeplitz-Schur Theorem states that any matrix summability method $t_n=\sum_{k=0}^\infty A_{n,k} s_k \,(n=0,1,2,\ldots)$ in which $\lim_{n\to\infty} s_n$ is defined by $\lim_{n\to\infty} t_n$ is regular \emph{if and only if} the following three conditions hold \citep{HardyDiverg}: \index{regularity}\index{Toeplitz-Schur Theorem}\index{matrix summability}
\begin{enumerate}
\item $\sum_{k=0}^\infty |A_{n, k}| < H$, for all $n$ and some constant $H$ that is independent of $n$. 
\item $\lim_{n\to\infty} A_{n, k} = 0 \;$ for each $k$. 
\item $\lim_{n\to\infty} \sum_{k=0}^\infty A_{n,k} = 1$. 
\end{enumerate}

Using Proposition \ref{XiLimitSeq}, we see that the summability method $\chi$ is a matrix summability method characterized by $A_{n,k} = p_n(k)/n = k\,\chi_n(k)/n$. Because $A_{n,k}\ge 0$ and $\sum_{k=0}^\infty A_{n,k} = \sum_{k=0}^n A_{n,k} = 1$, both conditions 1 and 3 are immediately satisfied. In addition, for each fixed $k$,  $\lim_{n\to\infty} A_{n, k} = 0$, thus condition 2 is also satisfied. Therefore, the summability method $\chi$ is regular\footnote{In fact, because  $A_{n,k}\ge 0$, it is also \emph{totally regular} (see \citep{HardyDiverg} for a definition of this term).}.
\end{proof}

\section{Convergence}\label{sec::convergence}
In this section, we derive some properties related to the convergence of the summability method $\chi$. As is customary in the literature (cf. the \emph{Borel-Okada} principle \citep{HardyDiverg,tauberianTheory}), we analyze the conditions when $\chi$ evaluates the Taylor series expansion of the geometric series $\sum_{k=0}^\infty x^k$ to the value $(1-x)^{-1}$. We will focus on the case when $x\in\mathbb{R}$, and provide an asymptotic expression to the error term afterward. 

\subsection{The geometric series test}
We begin with the following elementary result. 
\begin{lemma}\label{exp_approx_bound_lemma}
For any $n\ge 1$ and all $x\in[0,\,n]$, we have $\big|(1-\frac{x}{n})^n-e^{-x} \big|\le \frac{1}{e\,n}$. 
\end{lemma}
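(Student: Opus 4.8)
The plan is to reduce the bound to a one-dimensional calculus problem and estimate the maximum of the relevant function. First I would fix $n \ge 1$ and define $g(x) = e^{-x} - (1-\tfrac{x}{n})^n$ for $x \in [0, n]$. Note that $g(0) = 0$ and $g(n) = e^{-n} \le \tfrac{1}{en}$ already (since $e^{-n} \le \tfrac{1}{en}$ for $n\ge 1$, as $e^{n} \ge e n$), so the boundary cases are harmless, and it suffices to control interior critical points. A natural approach is to write $(1-\tfrac{x}{n})^n = e^{n\log(1-x/n)}$ and compare exponents: since $\log(1-t) \le -t$ for $t \in [0,1)$, we get $n\log(1-\tfrac{x}{n}) \le -x$, hence $(1-\tfrac{x}{n})^n \le e^{-x}$ and therefore $g(x) \ge 0$ throughout $[0,n)$. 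So the absolute value can be dropped and we only need an upper bound on $g$.

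Next I would bound $g(x)$ from above. Using $e^{-x} - (1-\tfrac{x}{n})^n = e^{-x}\bigl(1 - e^{x + n\log(1-x/n)}\bigr)$ and the inequality $1 - e^{-u} \le u$ for $u \ge 0$ with $u = -x - n\log(1-\tfrac{x}{n}) \ge 0$, we obtain
\begin{equation*}
g(x) \le e^{-x}\Bigl(-x - n\log\bigl(1-\tfrac{x}{n}\bigr)\Bigr).
\end{equation*}
Now I would invoke the second-order estimate $-\log(1-t) \le t + \tfrac{t^2}{2(1-t)}$ (or the cruder $-t - \log(1-t) \le \tfrac{t^2}{2(1-t)}$) with $t = x/n$, which gives $-x - n\log(1-\tfrac{x}{n}) \le \tfrac{x^2}{2(n-x)}$. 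Substituting, $g(x) \le \tfrac{x^2 e^{-x}}{2(n-x)}$. This still blows up as $x \to n$, so for $x$ near $n$ I would instead use the direct bound $g(x) \le e^{-x} \le e^{-x}$ combined with the crude observation that on, say, $[n-1, n]$ we have $e^{-x} \le e^{-(n-1)} = e \cdot e^{-n} \le e \cdot \tfrac{1}{en} \cdot \tfrac{1}{1}$... this needs care, so an alternative cleaner route is: on the whole interval, bound $g(x) \le \max\{ e^{-x}, \tfrac{x^2 e^{-x}}{2(n-x)}\}$ is awkward. Instead, the cleanest path is probably to analyze $h(x) = x^2 e^{-x}/(2(n-x))$ only where it is below $1/(en)$ and handle the rest separately — but I suspect the intended proof simply differentiates $g$ directly.

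Therefore the cleanest plan: set $g'(x) = -e^{-x} + (1-\tfrac{x}{n})^{n-1} = 0$ at an interior maximum, so $(1-\tfrac{x}{n})^{n-1} = e^{-x}$ at the critical point $x^*$. Then $g(x^*) = e^{-x^*} - (1-\tfrac{x^*}{n})^n = e^{-x^*} - (1-\tfrac{x^*}{n})\, e^{-x^*} = \tfrac{x^*}{n}\, e^{-x^*}$. Finally I would bound $\tfrac{x^*}{n} e^{-x^*} \le \tfrac{1}{n}\,\bigl(\sup_{x\ge 0} x e^{-x}\bigr) \cdot \tfrac{1}{x^*} \cdot x^*$... more directly: $x^* e^{-x^*} \le \sup_{t \ge 0} t e^{-t} = e^{-1}$, attained at $t=1$. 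Hence $g(x^*) \le \tfrac{1}{n}\cdot\tfrac{1}{e} = \tfrac{1}{en}$, which together with the boundary check $g(0)=0$, $g(n) = e^{-n} \le \tfrac{1}{en}$ gives the claim. The main obstacle is the bookkeeping around the endpoint $x = n$ and confirming the critical point computation yields exactly the factor $\tfrac{x^*}{n}e^{-x^*}$; once that identity is in hand, the bound $xe^{-x} \le e^{-1}$ finishes it immediately, so the real work is just setting up $g'(x^*) = 0$ correctly and substituting back.
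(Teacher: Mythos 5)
Your final argument is correct and is essentially the paper's own proof: check the endpoints $g(0)=0$ and $g(n)=e^{-n}\le\frac{1}{en}$, use the critical-point condition $(1-\frac{x^*}{n})^{n-1}=e^{-x^*}$ to get $|g(x^*)|=\frac{x^*}{n}e^{-x^*}$, and finish with $\sup_{t\ge 0} t e^{-t}=e^{-1}$. The abandoned detour through logarithmic estimates is unnecessary but harmless; the clean route you settle on matches the paper exactly.
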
 
\begin{proof}
Define $g_n(x) = (1-x/n)^n-e^{-x}$ to be the approximation error of using $(1-x/n)^n$ for the exponential function $e^{-x}$. Then: 
\begin{align*}
g_n(0) &= 0 \\ 
|g_n(n)| &= e^{-n}\\
g'_n(x_0) = 0 \;\wedge\; x_0\in[0,\,n] \quad &\Leftrightarrow \quad e^{-x_0} = \big(1-\frac{x_0}{n}\big)^{n-1}
\end{align*}
In the last case, we have $|g(x_0)| = \frac{x_0}{n}\,e^{-x_0}\le \frac{1}{e\,n}$. Because $(e\,n)^{-1}\ge e^{-n}\ge 0$ for all $n\ge 1$, we deduce the statement of the lemma.
\end{proof}

\begin{proposition}\label{kappa_prop}
The $\chi$-sum of the infinite sequence $(x^k)_{k=0,1,2,\ldots}$ for $x\in\mathbb{R}$ is $(1-x)^{-1}$ if and only if $-\kappa < x < 1$, where $\kappa\approx 3.5911$ is the solution to $\kappa\,\log\kappa -\kappa = 1$. For values of $x\in\mathbb{R}$ that lie outside the open set $(-\kappa,\,1)$, the infinite sequence $(x^k)_{k=0,1,2,\ldots}$ is not $\chi$-summable.
\end{proposition}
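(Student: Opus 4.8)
The plan is to collapse the partial sum $S_n(x)=\sum_{k=0}^n\chi_n(k)\,x^k$ into a single integral amenable to Laplace's method, and then to pin down exactly when a subdominant term fails to vanish. First I would use the closed form $\chi_n(k)=\tfrac{n!}{(n-k)!\,n^k}$ and substitute $m=n-k$ to get, for $x\neq0$,
\[
S_n(x)=\frac{n!\,x^n}{n^n}\sum_{m=0}^n\frac{(n/x)^m}{m!}
=\frac{x^n e^{n/x}}{n^n}\,\Gamma(n+1,\,n/x),
\]
where the second equality uses the elementary identity $\sum_{m=0}^n y^m/m!=e^{y}\Gamma(n+1,y)/n!$, valid for every real $y\neq0$ (proved by differentiating both sides). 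Rescaling by $t=nu$ then gives the clean expression
\[
S_n(x)=n\,x^n e^{n/x}\int_{1/x}^{\infty}u^n e^{-nu}\,\D u
=n\,x^n e^{n/x}\int_{1/x}^{\infty}e^{\,n\,\phi(u)}\,\D u,\qquad \phi(u)=\log u-u,
\]
with $S_n(0)=1=(1-0)^{-1}$ handled trivially. Since $\phi$ attains its unique maximum at $u=1$ with $\phi(1)=-1$, everything reduces to the position of the cutoff $1/x$ relative to $1$.

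For $0<x<1$ the cutoff $1/x>1$ lies to the right of the maximizer, so the integral is an endpoint Laplace integral, and a one-line integration-by-parts bound gives $\int_{1/x}^\infty e^{n\phi(u)}\,\D u\sim \tfrac{e^{n\phi(1/x)}}{n|\phi'(1/x)|}=\tfrac{x^{-n}e^{-n/x}}{n(1-x)}$, hence $S_n(x)\to(1-x)^{-1}$. For $x\ge1$ the maximizer $u=1$ lies inside $[1/x,\infty)$, and the full interior Laplace estimate (a one-sided contribution when $x=1$, since $\phi'(1)=0$) gives $\int_{1/x}^\infty e^{n\phi(u)}\,\D u\sim c\,n^{-1/2}e^{-n}$, so $S_n(x)\sim c\sqrt n\,e^{\,n(\log x+1/x-1)}$; since $\log x+1/x-1\ge0$ on $[1,\infty)$ (its derivative is $(x-1)/x^2$) with equality only at $x=1$, $S_n(x)$ is unbounded and $(x^k)$ is not $\chi$-summable.

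For $x<0$ I would write $x=-r$ with $r>0$ and split $\int_{1/x}^\infty=\int_{-1/r}^0+\int_0^\infty$. After $u=-w$ the first piece becomes $(-1)^n\int_0^{1/r}w^n e^{nw}\,\D w$, an endpoint Laplace integral for $\psi(w)=\log w+w$ (monotone on $(0,\infty)$), whose contribution to $S_n(-r)$ tends to $\tfrac1{1+r}=(1-x)^{-1}$ regardless of $r$; the second piece equals $n!/n^{n+1}$, and by Stirling its contribution to $S_n(-r)$ is $\sim(-1)^n\sqrt{2\pi n}\,e^{\,n(\log r-1/r-1)}$. Therefore $S_n(-r)$ converges to $(1-x)^{-1}$ exactly when $\log r-1/r-1<0$, i.e.\ $r\log r-r<1$, i.e.\ $r<\kappa$; at $r=\kappa$ the extra term is $\sim(-1)^n\sqrt{2\pi n}$ and oscillates unboundedly, while for $r>\kappa$ it diverges, so $(x^k)$ is not $\chi$-summable for $x\le-\kappa$. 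As $r\log r-r<0<1$ holds automatically for $0<r<1$, this dovetails with the $0<x<1$ case, and combining the three regimes yields precisely the domain $(-\kappa,1)$.

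I expect the main obstacle to lie in making the Laplace asymptotics rigorous in \emph{both} directions of the equivalence: one needs honest (ideally explicit) remainder bounds in the dominant-endpoint estimates — here Lemma~\ref{exp_approx_bound_lemma}, or a direct integration-by-parts estimate of $\int_a^\infty e^{-n(u-\log u)}\,\D u$, should suffice — and, in the three divergent regimes, one must certify genuine divergence rather than mere failure of the expected limit, which is handled by isolating the explicit leading term and observing it is unbounded (and, at $x=-\kappa$, oscillatory in sign). The boundary point $x=1$ is a separate small case where the governing integral is one-sided at its maximizer; the conclusion $S_n(1)\to\infty$ is unaffected.
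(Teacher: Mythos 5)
Your proposal is correct, and for the decisive regime $x\le -1$ it is, at bottom, the same computation as the paper's Case III: both split an integral representation of $S_n(x)=\sum_{k=0}^n\chi_n(k)\,x^k$ into a piece tending to $(1-x)^{-1}$ and an exactly evaluable tail whose size, by Stirling, is $\sqrt{2\pi n}\,e^{\,n(\log r-1/r-1)}$ with $r=|x|$, so that the threshold $r\log r-r=1$, i.e.\ $r=\kappa$, drops out. The packaging differs: you reach the representation algebraically via $S_n(x)=x^n e^{n/x}\,\Gamma(n+1,n/x)/n^n$ and then run endpoint/interior Laplace asymptotics uniformly over all regimes, whereas the paper writes $k!=\int_0^\infty t^k e^{-t}\,\D t$, interchanges sum and integral to get $\int_0^\infty(1-tz/n)^n e^{-t}\,\D t$, controls the finite part with the elementary bound of Lemma~\ref{exp_approx_bound_lemma}, and disposes of $-1<x<1$ by regularity (Corollary~\ref{XiregStabLinear}) and of $x\ge 1$ by positivity; the paper's shortcuts are quicker for those easy cases, while your unified treatment buys explicit rates (e.g.\ $S_n(1)\sim\sqrt{\pi n/2}$) without invoking regularity. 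Two points in your favour: your exact tail $(-1)^n\,n!\,r^n e^{-n/r}/n^n$ is the correct evaluation --- the paper's displayed tail $n!\,z^n e^{n/z}/n^n$ carries a sign slip in the exponent and omits the alternating factor, and it is only the $e^{-n/z}$ form that makes Stirling deliver the stated criterion $|x|<\kappa$ --- and your isolation of that term as an unbounded, sign-oscillating summand is exactly what certifies genuine non-summability at $x=-\kappa$ and beyond, rather than mere failure of the value $(1-x)^{-1}$. The only remaining work is what you already flagged: honest remainder bounds for the endpoint Laplace estimates, for which integration by parts or the paper's Lemma~\ref{exp_approx_bound_lemma} indeed suffices.
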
 
\begin{proof}
\textbf{(Case I)}. 
First, we consider the case when $x\ge 1$. Because every term in the sequence $x^k$ is non-negative, $\chi_n(k)\ge 0$, and $\lim_{n\to\infty} \chi_n(k)=1$ for any fixed $k\ge 0$, we have: 
\begin{equation*}
\lim_{n\to\infty}\;\Big\{ \sum_{k=0}^\infty \chi_n(k)\, x^k\Big\} = \infty, \quad\quad \text{ if } x\ge 1
\end{equation*} 
More generally, if an infinite sequence $(a_k)_{k=0,1,\ldots}$ is $\chi$-summable but $\sum_{k=0}^\infty a_k$ does not exist, then the sequence $(a_k)_{k=0,1,\ldots}$ must oscillate in sign infinitely many times. This is another way of stating that the summability method is \emph{totally-regular} \citep{HardyDiverg}. 

\textbf{(Case II)}. 
Second, we consider the case when $-1<x<1$. Because the geometric series converges in this domain and the summability method $\chi$ is regular (i.e. consistent with ordinary convergence) as proved in Corollary \ref{XiregStabLinear}, we obtain: 
\begin{equation*}
\lim_{n\to\infty}\;\Big\{ \sum_{k=0}^\infty \chi_n(k)\, x^k\Big\} = \frac{1}{1-x}, \quad\quad \text{ if } -1<x< 1
\end{equation*} 

\textbf{(Case III)}. Finally, we consider the case when $x\le -1$. To simplify notation, we will write $x=-z$, where $z\ge 1$. We have: 
\begin{align*}
\sum_{k=0}^n\,\chi_n(k)\,(-z)^k &= \sum_{k=0}^n\,\chi_n(k)\,\frac{(-z)^k}{k!}\;\int_0^\infty t^k\,e^{-t}\,\D t = \int_0^\infty \sum_{k=0}^n\,\chi_n(k)\frac{(-zt)^k}{k!}\,e^{-t}\;\D t\\ 
&= \int_0^\infty (1-\frac{tz}{n})^n\,e^{-t}\,\D t \\ 
&= \int_0^\frac{n}{z}\; (1-\frac{tz}{n})^n\,e^{-t}\,\D t  \quad + \quad \int_\frac{n}{z}^\infty \; (1-\frac{tz}{n})^n\,e^{-t}\,\D t
\end{align*}

Now, we consider each integral separately. First, we use the change of variable $w=1-\frac{tz}{n}$ to deduce that the second integral is given by: 
\begin{equation*}
\int_\frac{n}{z}^\infty \; (1-\frac{tz}{n})^n\,e^{-t}\,\D t = \frac{(n!)\,z^n\,e^\frac{n}{z}}{n^n}
\end{equation*}

For the first integral, on the other hand, we use Lemma \ref{exp_approx_bound_lemma} to obtain the bound:
\begin{align*}
\int_0^\frac{n}{z}  \; \Big|(1-\frac{tz}{n})^n\;-\;e^{-tz}\Big|\,e^{-t}\,\D t \;&\le\; \frac{1}{e\,n}\int_0^\frac{n}{z}  \; e^{-t}\,\D t \; \\
&= \; \frac{1-e^{-n/z}}{e\,n} \; \rightarrow\; 0\quad \text{ as } n\to\infty,
\end{align*}

Taking the limit as $n\to\infty$, we deduce that: 
\begin{equation*}
\lim_{n\to\infty}\;  \int_0^\frac{n}{z}\; (1-\frac{tz}{n})^n\,e^{-t}\,\D t = \lim_{n\to\infty} \int_0^\frac{n}{z} e^{-t(1+z)}\,\D t = \frac{1}{1+z} = \frac{1}{1-x}
\end{equation*} 

Therefore, in order for the summability method $\chi$ to evaluate the geometric series to the value $(1-x)^{-1}$, we must have:
\begin{equation*}
\lim_{n\to\infty}\;\big\{\frac{(n!)\,|x|^n\,e^\frac{n}{|x|}}{n^n}\big\} = 0 
\end{equation*}

Using Stirling's approximation \citep{Robbins1955}: 
\begin{equation*}
\frac{(n!)\,|x|^n\,e^\frac{n}{|x|}}{n^n}\,\sim  \Big(\frac{|x|\,e^\frac{1}{|x|}}{e}\Big)^n, 
\end{equation*}
which goes to zero as $n\to\infty$ only if $|x|<\kappa$.
\end{proof}

\subsection{Asymptotic Analysis} 
Next, we derive an asymptotic expression to the error term when the summability method $\chi$ is applied to a power series. To do this, we begin with the following lemma. 
\begin{lemma}\label{lemma441} 
Let $\sum_{k=0}^\infty a_k\,(x-x_0)^k$ be a power series for some function $f(x)$ that is analytic throughout the domain $[x_0,\,x]$. Then: 
\begin{equation}\label{lemma441Eq} 
\lim_{n\to\infty} \Big\{\sum_{k=1}^n \chi_n(k) \Big[f(x)-\sum_{j=0}^{k-1} \frac{f^{(j)}(x_0)}{j!} (x-x_0)^j\Big] \Big\}= (x-x_0)\,f'(x),
\end{equation}
provided that $\sum_{k=0}^n \chi_n(k) a_k\,(x-x_0)^k$ converges uniformly to $f(x)$ in the domain $[x_0,\,x]$ as $n\to\infty$. 
\end{lemma}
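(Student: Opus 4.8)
The plan is to turn the discrete sum into an integral by replacing each bracketed Taylor remainder with its integral representation, and then to recognize the resulting integrand as a $\chi$-transform of $f'$, to which the hypothesis (applied to $f'$) can be invoked. Since $f$ is analytic, hence smooth, on $[x_0,x]$, I would first write, for every $k\ge 1$,
\[
f(x)-\sum_{j=0}^{k-1}\frac{f^{(j)}(x_0)}{j!}(x-x_0)^j=\frac{1}{(k-1)!}\int_{x_0}^{x}(x-t)^{k-1}f^{(k)}(t)\,\D t,
\]
substitute this into the (finite) sum and interchange summation and integration to obtain
\[
\sum_{k=1}^n\chi_n(k)\Big[f(x)-\sum_{j=0}^{k-1}\tfrac{f^{(j)}(x_0)}{j!}(x-x_0)^j\Big]=\int_{x_0}^x H_n(t)\,\D t,\qquad H_n(t):=\sum_{k=1}^n\tfrac{\chi_n(k)}{(k-1)!}\,(x-t)^{k-1}f^{(k)}(t).
\]

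Next I would perform an ``integration by parts in disguise.'' Set $\Phi_n(t):=\sum_{k=0}^n\chi_n(k)\tfrac{f^{(k)}(t)}{k!}(x-t)^k$ and $\Psi_n(t):=\sum_{k=0}^n\chi_n(k)\tfrac{f^{(k+1)}(t)}{k!}(x-t)^k$. Applying the product rule to the $k$-th summand of $\Phi_n$ contributes $\chi_n(k)\tfrac{f^{(k+1)}(t)}{k!}(x-t)^k$ to $\Psi_n$ and $-\chi_n(k)\tfrac{f^{(k)}(t)}{(k-1)!}(x-t)^{k-1}$ to $-H_n$, so that $\partial_t\Phi_n(t)=\Psi_n(t)-H_n(t)$, i.e. $H_n(t)=\Psi_n(t)-\partial_t\Phi_n(t)$. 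Integrating over $[x_0,x]$ gives $\int_{x_0}^x H_n(t)\,\D t=\int_{x_0}^x\Psi_n(t)\,\D t-\big(\Phi_n(x)-\Phi_n(x_0)\big)$. Here $\Phi_n(x)=f(x)$ because all terms with $k\ge1$ vanish, while $\Phi_n(x_0)=\sum_{k=0}^n\chi_n(k)\tfrac{f^{(k)}(x_0)}{k!}(x-x_0)^k\to f(x)$ by the hypothesis; hence the boundary term $\Phi_n(x)-\Phi_n(x_0)\to0$, and the claim is reduced to showing $\int_{x_0}^x\Psi_n(t)\,\D t\to(x-x_0)f'(x)$.

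Now observe that $\Psi_n(t)$ is precisely the $n$-th partial sum of the $\chi$-transform of the Taylor expansion of $f'$ about the point $t$, evaluated at $x$. Since $f'$ is analytic on $[t,x]\subseteq[x_0,x]$, the same reasoning that underlies the hypothesis — equivalently, the hypothesis with $f$ replaced by $f'$ and $x_0$ by $t$ — yields $\Psi_n(t)\to f'(x)$, and I would check that this convergence is uniform in $t\in[x_0,x]$ with $\sup_{n,t}|\Psi_n(t)|<\infty$. Bounded convergence on the finite interval then gives $\int_{x_0}^x\Psi_n(t)\,\D t\to\int_{x_0}^x f'(x)\,\D t=(x-x_0)f'(x)$, which together with the boundary term estimate completes the proof.

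The hard part is this last step: one must know that the convergence of the $\chi$-transform of $f$ guaranteed on $[x_0,x]$ genuinely propagates to $f'$ and to a moving base point $t$, with enough uniformity to pass the limit through the integral. This is immediate when the uniform convergence in the hypothesis is in fact locally uniform on a complex neighborhood of $[x_0,x]$, since then Weierstrass's theorem delivers convergence of the derivatives as well; this is the situation in essentially all applications (and can be verified directly, e.g. for the geometric series via the integral formula of Proposition~\ref{kappa_prop}). Absent such a strengthening, the required uniform control of $\Psi_n$ would have to be postulated alongside the hypothesis. Everything else — the integral form of the remainder, the identity $\partial_t\Phi_n=\Psi_n-H_n$, and the evaluations $\Phi_n(x)=f(x)$, $\Phi_n(x_0)=$ partial $\chi$-sum — is routine bookkeeping.
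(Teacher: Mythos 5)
Your argument is correct and follows the same backbone as the paper's rigorous part: both replace each bracketed term with the integral form of the Taylor remainder, reduce the problem to showing that the resulting integrand --- which is (up to an index shift $\chi_n(k)$ vs.\ $\chi_n(k+1)$ in the weights) the $\chi$-transform of $f'$ expanded about a moving base point $t$ and evaluated at $x$ --- tends to $f'(x)$, and then pass the limit through the integral over $[x_0,x]$ to obtain $(x-x_0)f'(x)$. Where you differ is in the bookkeeping: the paper asserts the pointwise limit of the integrand directly in (\ref{lemma441_7}) and interchanges limit and integral by appealing to ``uniform convergence,'' whereas you first peel off an exact derivative via the identity $\partial_t\Phi_n=\Psi_n-H_n$, dispose of the boundary term $\Phi_n(x)-\Phi_n(x_0)$ using the stated hypothesis (this is the one place the hypothesis, read literally, actually enters), and then handle $\int\Psi_n$ by bounded convergence; the paper also prefaces its proof with a heuristic ODE derivation ($\frac{\D}{\D x}g_f=f'+g_{f'}$, $g_f(x_0)=0$) that you omit, which is harmless since it presupposes existence of the limit. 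Your closing caveat is well taken and is in fact the same soft spot in the paper's own proof: the hypothesis concerns the $\chi$-transform of $f$ about the fixed point $x_0$, while both arguments need convergence (with enough uniformity in $t$) of the $\chi$-transform of $f'$ about every $t\in[x_0,x]$; the paper simply asserts this in (\ref{lemma441_7}), whereas you correctly note it must either be deduced from a locally uniform (complex-neighborhood) strengthening of the hypothesis via Weierstrass's theorem or be postulated. So your proof is, if anything, slightly more careful than the paper's at the step both treatments find hardest.
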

\begin{proof}
It is straightforward to see that if the limit in (\ref{lemma441Eq}) exists, then (\ref{lemma441Eq}) must hold. We will prove this formally. To see this, define $g_f$ to be a functional of $f(x)$ that is given by: 
\begin{equation}\label{lemma441_1} 
g_f=\lim_{n\to\infty} \Big\{\sum_{k=1}^n \chi_n(k) \Big[f(x)-\sum_{j=0}^{k-1} \frac{f^{(j)}(x_0)}{j!} (x-x_0)^j\Big] \Big\}
\end{equation}
Now, we differentiate both sides with respect to $x$, which yields: 
\begin{align*}\label{lemma441_2} 
\frac{\D}{\D x} g_f&=\lim_{n\to\infty} \Big\{\sum_{k=1}^n \chi_n(k) \Big[ \frac{f^{(k)}(x_0)}{(k-1)!} (x-x_0)^{(k-1)}+f'(x)-\sum_{j=0}^{k-1} \frac{f^{(j+1)}(x_0)}{j!} (x-x_0)^j\Big] \Big\}\\
&=g_{f'} + \lim_{n\to\infty} \sum_{k=1}^n \chi_n(k) \frac{f^{(k)}(x_0)}{(k-1)!} (x-x_0)^{(k-1)}\\
&=g_{f'} + f'(x)
\end{align*}
Therefore, we have: 
\begin{equation}\label{lemma441_3}
\frac{\D}{\D x} g_f = f'(x)+ g_{f'}
\end{equation} 
Since $g_f(x_0)=0$, the solution is given by: 
\begin{equation}\label{lemma441_4}
g_f(x) = (x-x_0)\,f'(x)
\end{equation} 
The above derivation assumes that the limit exists. To prove that (\ref{lemma441Eq}) holds under the stated conditions, we note that $f(x)-\sum_{j=0}^{k-1} \frac{f^{(j)}(x_0)}{j!} (x-x_0)^j$ is the error term of the Taylor series expansion, which is \emph{exactly} given by: 
\begin{equation}\label{lemma441_5}
f(x)-\sum_{j=0}^{k-1} \frac{f^{(j)}(x_0)}{j!} (x-x_0)^j = \int_{x_0}^x \frac{f^{(k)}(t)}{(k-1)!} (x-t)^{k-1} \,dt
\end{equation} 
Upon using the last expression, we have that: 
\begin{align}\label{lemma441_6}
\nonumber g_f &= \lim_{n\to\infty} \sum_{k=1}^{n} \chi_n(k) \int_{x_0}^x \frac{f^{(k)}(t)}{(k-1)!} (x-t)^{k-1} \,dt \\
&= \int_{x_0}^x  \lim_{n\to\infty}\sum_{k=1}^{n} \chi_n(k) \frac{f^{(k)}(t)}{(k-1)!} (x-t)^{k-1} \,dt
\end{align} 

Here, exchanging sums with integrals is justifiable when uniform convergence holds. However:
\begin{equation}\label{lemma441_7}
 \lim_{n\to\infty}\sum_{k=1}^{n} \chi_n(k) \frac{f^{(k)}(t)}{(k-1)!} (x-t)^{k-1} = f'(x), \\ \text{ for all } t\in[x_0, x]
\end{equation} 
Plugging (\ref{lemma441_7}) into (\ref{lemma441_6}) yields the desired result.
\end{proof}

\begin{theorem}\label{theorem441} 
Let $\hat f_n(x) = \sum_{k=0}^n\,\chi_n(k)\,\frac{f^{(k)}(x_0)}{k!}\,(x-x_0)^k$. Then, under the stated conditions of Lemma \ref{lemma441}, the error as $n\to\infty$ is asymptotically given by:
\begin{equation}\label{theorem441ErrorTerm}
f(x)-\hat f_n(x) \;\sim\; \frac{f^{\prime\prime}(x) (x-x_0)^2}{2n}
\end{equation}
\end{theorem}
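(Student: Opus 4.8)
The plan is to use the same differentiate-and-solve-an-ODE technique that powered Lemma \ref{lemma441}, but now applied one order deeper so as to extract the leading correction term in $1/n$. Concretely, I would define the scaled error functional
\begin{equation*}
E_f(x) \;=\; \lim_{n\to\infty}\; n\,\Big[f(x)-\hat f_n(x)\Big] \;=\; \lim_{n\to\infty}\; n\sum_{k=0}^n \chi_n(k)\Big[\frac{f^{(k)}(x)}{k!}\cdot 0 \Big]\ldots
\end{equation*}
—more carefully, write $f(x)-\hat f_n(x)$ using the integral form of the Taylor remainder as in (\ref{lemma441_5}), obtaining $f(x)-\hat f_n(x) = f(x) - \sum_{k=0}^n \chi_n(k)\frac{f^{(k)}(x_0)}{k!}(x-x_0)^k$. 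The first step is to relate $\hat f_n$ to the partial-sum-averaging picture of Proposition \ref{XiLimitSeq}: since $\hat f_n(x)$ is exactly $\sum_k \chi_n(k)a_k(x-x_0)^k$ with $a_k=f^{(k)}(x_0)/k!$, and by Lemma \ref{lemma441} the quantity $\sum_{k=1}^n\chi_n(k)\,[f(x)-s_{k-1}]$ tends to $(x-x_0)f'(x)$ where $s_{k-1}$ is the $(k-1)$-st Taylor partial sum, I can hope to show that $n[f(x)-\hat f_n(x)]$ converges and identify its limit via a similar ODE.

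The core computation I would carry out: set $R_f(x) = \lim_{n\to\infty} n\,[f(x)-\hat f_n(x)]$, assume it exists, differentiate in $x$, and use the recurrence $\chi_n(k)=(1-\tfrac{k-1}{n})\chi_n(k-1)$ together with (\ref{lemmaSumXinEq}) to re-index the sum. The factor $(1-\tfrac{k-1}{n})$ is what produces the $1/n$: writing $\chi_n(k) = \chi_{n}(k-1) - \tfrac{k-1}{n}\chi_n(k-1)$, the correction piece $-\tfrac{k-1}{n}\chi_n(k-1)$ contributes an $O(1/n)$ term, and when multiplied by $n$ in the definition of $R_f$ it survives in the limit. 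Differentiating should yield a first-order linear ODE of the shape $\frac{\D}{\D x}R_f = R_{f'} + (\text{forcing term built from }f'', x-x_0)$, analogous to (\ref{lemma441_3}), together with the initial condition $R_f(x_0)=0$. Solving that ODE with the forcing term identified as essentially $(x-x_0)f''(x)$ (plus possibly lower-order pieces that cancel) should give $R_f(x) = \tfrac{1}{2}(x-x_0)^2 f''(x)$, which is exactly (\ref{theorem441ErrorTerm}).

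The main obstacle I anticipate is rigorously justifying that $n\,[f(x)-\hat f_n(x)]$ actually converges — the heuristic ODE argument, just as in Lemma \ref{lemma441}, only pins down the limit \emph{assuming} it exists. To close this gap I would go back to the exact integral remainder representation (\ref{lemma441_5}), substitute it into $f(x)-\hat f_n(x) = \lim \ldots$ is already known to be $o(1)$, and then study $n\sum_{k=1}^n \chi_n(k)\int_{x_0}^x \frac{f^{(k)}(t)}{(k-1)!}(x-t)^{k-1}\,\D t$ minus its Lemma \ref{lemma441} limit $n\cdot(x-x_0)f'(x)/n$... the bookkeeping here is where care is needed. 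A cleaner route may be to apply Lemma \ref{lemma441} \emph{twice}: once to $f$ to get the $(x-x_0)f'(x)$ identity, and once to a related function (e.g. an antiderivative or the map $t\mapsto f'(t)$) so that the second-order term appears as the next entry in a telescoping hierarchy $\sum_k \chi_n(k)[\,\cdot\,]$, extracting the $1/n$ coefficient by comparing $\chi_n(k)$ with its $n\to\infty$ limit $1$ via $1-\chi_n(k) = \tfrac{1}{n}\binom{k}{2}+O(1/n^2)$. Once the convergence of the rescaled error is established, identifying the constant $\tfrac12$ and the factor $(x-x_0)^2 f''(x)$ is routine given the uniform-convergence hypotheses of Lemma \ref{lemma441}, which I would assume throughout.
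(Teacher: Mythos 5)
Your plan is genuinely different from the paper's proof, but it has a real gap, and it is the one you yourself flag: the existence of $R_f(x)=\lim_{n\to\infty} n\,[f(x)-\hat f_n(x)]$ \emph{is} the substance of Theorem \ref{theorem441} (namely that the error is $\Theta(1/n)$ with a specific coefficient), and the differentiate-and-solve argument, exactly as in the heuristic half of Lemma \ref{lemma441}, only identifies the limit conditionally on its existence. The steps you sketch for closing the gap do not work as stated. First, differentiating inside $\lim_n$ requires uniform-in-$x$ control of the rescaled error, which again presupposes what is to be proved. Second, carrying out the re-indexing with $\chi_n(k)-\chi_n(k+1)=\tfrac{k}{n}\chi_n(k)$ gives as forcing term $\sum_{k=0}^{n-1} k\,\chi_n(k)\tfrac{f^{(k+1)}(x_0)}{k!}(x-x_0)^k$ plus a boundary term $n\,\chi_n(n)\tfrac{f^{(n+1)}(x_0)}{n!}(x-x_0)^n$; showing that this converges to $(x-x_0)f''(x)$ under only the hypotheses of Lemma \ref{lemma441} is a claim of essentially the same depth as Lemma \ref{lemma441} itself (the Taylor series of $f''$ need not converge at $x$ in the divergent regime the theorem is meant to cover), and the boundary term is not obviously negligible (for the geometric series with $e<|x|<\kappa$ it grows without bound, so only cancellation inside the full sum can save you). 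Third, the expansion $1-\chi_n(k)=\tfrac{1}{n}\binom{k}{2}+O(1/n^2)$ is valid only for fixed $k$, not uniformly up to $k\sim n$ (where $\chi_n(k)$ behaves like $e^{-k^2/(2n)}$), and substituting it termwise yields $\tfrac{1}{n}\sum_k \binom{k}{2}a_k(x-x_0)^k$, which diverges exactly when the underlying Taylor series diverges — again the interesting case.

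For contrast, the paper avoids the existence issue by never rescaling a limit it has not constructed: it returns to the recursive first-order approximation scheme of Section \ref{sum_taylor_derivation}, writes the error of each elementary step exactly via the Taylor remainder (\ref{XiErrTherm_2}), counts how often each step's error propagates using the $\binom{n-j}{k}$ path counts in the binary pyramid, and obtains the \emph{exact} decomposition (\ref{XiErrTherm_3})--(\ref{XiErrTherm_8}), in which the factor $\tfrac{1}{n}$ and the weights $\chi_n(k)$ appear automatically. The asymptotics of the weighted error averages $e_k^*$ are then computed against the limiting density $(1+k)(1-z)^k$, an integration-by-parts recurrence gives the closed form (\ref{XiErrTherm_14}), and only at the very end is Lemma \ref{lemma441} invoked (in effect for $f'$) to collapse (\ref{XiErrTherm_15}) into (\ref{theorem441ErrorTerm}). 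Your ODE gives the right answer and is a useful consistency check — indeed $R_f(x)=\tfrac12(x-x_0)^2f''(x)$ does satisfy $R_f'=R_{f'}+(x-x_0)f''(x)$ with $R_f(x_0)=0$ — but as a proof it is missing the convergence argument that constitutes the main work, so as it stands the proposal does not establish the theorem.
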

\begin{proof}
Because the function $f(x)$ is analytic at every point in the domain $[x_0, x]$, define $\epsilon>0$ to be the distance between the set $[x_0, x]$ and the nearest singularity point of $f(x)$. More formally, let $\mathcal{B}(z,\,r)\subset \mathbb{C}$ be the open ball of radius $r$ centered at $z$, and define: 
\begin{equation*}
\epsilon = \sup\; \{r\;|\;\forall z\in[x_0,\,x]: f \text{ is analytic in } \mathcal{B}(z,r) \}
\end{equation*}

In our construction of the summability method $\chi$ in Section \ref{sum_taylor_derivation}, we have used the following linear approximations, where $f_j^{(k)}$ is a shorthand for $f^{(k)}(x_0+jh)$, and $h$ is a small step size: 
\begin{equation}\label{XiErrTherm_1}
f_j^{(k)} \approx f_{j-1}^{(k)} + f_{j-1}^{(k+1)} h 
\end{equation}

Because the distance from $x_0+jh$ to the nearest singularity point of $f(x)$ is at least $\epsilon$, then selecting $h<\epsilon$ or equivalently $n>\frac{x-x_0}{\epsilon}$ implies that the error term of this linear approximation is \emph{exactly} given by (\ref{XiErrTherm_2}). This follows from the classical result in complex analysis that an analytic function is equal to its Taylor series representation within its radius of convergence, where the radius of convergence is, at least, equal to the distance to the nearest singularity.
\begin{equation}\label{XiErrTherm_2}
E_j^k = f_j^{(k)} - f_{j-1}^{(k)} - f_{j-1}^{(k+1)} h= \sum_{m=2}^\infty \frac{f_{j-1}^{(k+m)}}{m!} h^m
\end{equation}

Since we seek an asymptotic expression as $n\to\infty$, we assume that $n$ is large enough for (\ref{XiErrTherm_2}) to hold (i.e. $n>(x-x_0)/\epsilon$). Because higher-order derivatives at $x_0$ are computed exactly, we have $E_0^k =0$. Now, the linear approximation method was applied recursively in our construction of $\chi$ in Section \ref{sum_taylor_derivation}. Visually speaking, this repeated process mimics the expansion of a binary pyramid, depicted in Figure \ref{Fig441Pyramid}, whose nodes $(j, k)$ correspond to the linear approximations given by (\ref{XiErrTherm_1}) and the two children of each node $(j, k)$ are given by $(j-1, k)$ and $(j-1, k+1)$ as stated in that equation. It follows, therefore, that the number of times the linear approximation in (\ref{XiErrTherm_1}) is used for a fixed $n$ is equal to the number of paths from the root to the respective node in the binary pyramid, where the root is $(n, 0)$. It is a well-known result that the number of such paths is given by $\binom{n-j}{k}$.
\begin{figure} [t] 
\centering
\includegraphics{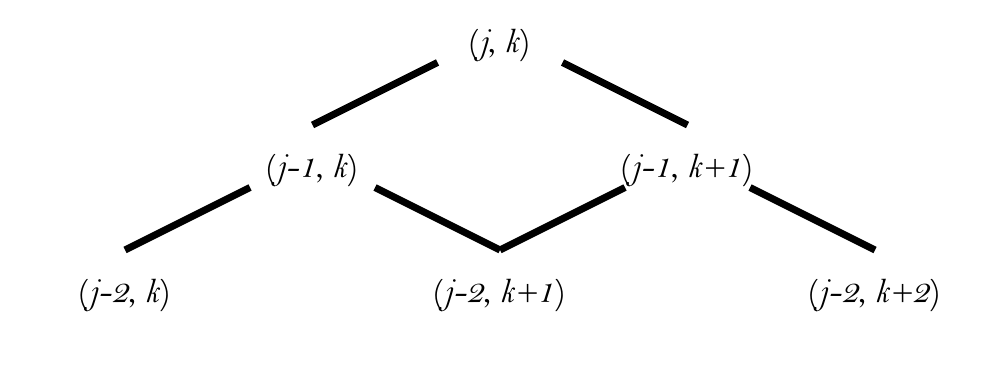}
\caption[A depiction of the recursive proof of the summability method $\Xi$]{A depiction of the recursive proof of the summability method in Proposition \ref{claim411}}
\label{Fig441Pyramid}
\end{figure}

Consequently, the error term of the $\chi$ summability method is given by:
\begin{equation}\label{XiErrTherm_3}
f(x)-\hat f_n(x) =\sum_{j=1}^n \binom{n-j}{0} E_j^0 +  \sum_{j=1}^{n-1} \binom{n-j}{1} E_j^1 h+  \sum_{j=1}^{n-2} \binom{n-j}{2} E_j^2 h^2\dotsm
\end{equation}
Define $e_k$ to be a weighted average of the errors $E_j^k$ that is given by: 
\begin{equation}\label{XiErrTherm_4}
e_k = \frac{\sum_{j=1}^{n-k} \binom{n-j}{k} E_j^k}{\sum_{j=1}^{n-k} \binom{n-j}{k}} 
\end{equation}
Then, we have:
\begin{equation}\label{XiErrTherm_5}
f(x)-\hat f_n(x) = e_0 \sum_{j=1}^n \binom{n-j}{0} +  e_1 \sum_{j=1}^{n-1} \binom{n-j}{1} h+  e_2 \sum_{j=1}^{n-2} \binom{n-j}{2} h^2 \dotsm
\end{equation}
Now, we make use of the identity $\sum_{j=1}^{n-k} \binom{n-j}{k} = \binom{n}{k+1}$, and substitute $h= \frac{x-x_0}{n}$, which yields: 
\begin{equation}\label{XiErrTherm_6}
f(x)-\hat f_n(x) = \frac{n}{x-x_0} \sum_{k=1}^n \chi_n(k) \frac{e_{k-1}}{k!} (x-x_0)^k
\end{equation}
Interestingly, the terms $\chi_n(k)$ appear again in the approximation error. Now, define $e_k^*$ by:
\begin{equation}\label{XiErrTherm_7}
e_k^* = \frac{e_k}{h^2} =  \sum_{j=1}^{n-k} \frac{\binom{n-j}{k}}{\binom{n}{k+1}} \sum_{m=0}^\infty \frac{f_{j-1}^{(k+m+2)}}{(m+2)!} h^{m}
\end{equation}
This yields:
\begin{equation}\label{XiErrTherm_8}
f(x)-\hat f_n(x) = \frac{x-x_0}{n} \sum_{k=1}^n \chi_n(k) \frac{e_{k-1}^*}{k!} (x-x_0)^k
\end{equation}

Next, we examine the weighted average error terms $e_k^*$ at the limit $n\to\infty$.  Because we desire an asymptotic expression of $f(x)-f_n(x)$ as $n\to\infty$ up to a first order approximation, we can safely ignore any $o(1)$ errors in the asymptotic expressions for $e_{k-1}^*$ in (\ref{XiErrTherm_8}). 

First, we note that the expression for $e_k^*$ given by (\ref{XiErrTherm_7}) is \emph{exact} and that the infinite sum converges because $n$ is assumed to be large enough such that $h$ is within the radius of convergence of the series (i.e. satisfies $h < \epsilon$). Therefore, it follows that $e_k^*$ is asymptotically given by (\ref{XiErrTherm_9}), where the expression is asymptotic as $n\to\infty$. This follows from the fact that a Taylor series expansion around a point $x_0$ is an asymptotic expression as $x\to x_0$ in the Poincar\'e sense if $f(x)$ is regular at $x_0$. 
\begin{equation}\label{XiErrTherm_9}
e_k^* \sim \sum_{j=1}^{n-k} \frac{\binom{n-j}{k}}{\binom{n}{k+1}} \big[\frac{f_{j-1}^{(k+2)}}{2} + o(1)\big]
\end{equation}
Because $p_k(j;\,n)=\binom{n-j}{k}/\binom{n}{k+1}$ is a normalized discrete probability mass function, and from the earlier discussion, the $o(1)$ term can be safely ignored. This gives us: 
\begin{equation}\label{XiErrTherm_9_2}
e_k^* \sim \sum_{j=1}^{n-k} \frac{\binom{n-j}{k}}{\binom{n}{k+1}} \frac{f_{j-1}^{(k+2)}}{2}
\end{equation}

To come up with a more convenient form, we note that the discrete probability mass function $p_k(j;\,n)=\binom{n-j}{k}/\binom{n}{k+1}$ approaches a probability density function at the limit $n\to\infty$ when $\frac{j}{n}$ is fixed at a constant $z$. Using Stirling's approximation \citep{Robbins1955}, such a probability density is given by: 
\begin{equation}\label{XiErrTherm_10}
\rho_k(z)=(1+k)(1-z)^k,\quad\quad 0\leq z \leq 1
\end{equation}
For example, if $k=0$, the probability density function is uniform as expected.

Upon making the substitution $z=j/n$, the discrete probability mass function $p_k(j;\,n)$ can be approximated by $\rho_k(z)$ whose approximation error vanishes at the limit $n\to\infty$. Using $\rho_k(z)$, $e_k^*$ is asymptotically given by: 
\begin{equation}\label{XiErrTherm_11}
e_k^* \sim \frac{1+k}{2} \int_0^1 (1-t)^k f^{(k+2)} \big(x_0 + t(x-x_0)\big)\,\D t
\end{equation}
Doing integration by parts yields the following recurrence identity:
\begin{equation}\label{XiErrTherm_12}
e_k^* \sim -\frac{1+k}{2(x-x_0)} f^{(k+1)}(x_0) + \frac{1+k}{x-x_0} e_{k-1}^* 
\end{equation}
Also, we have by direct evaluation of the integral in (\ref{XiErrTherm_10}) when $k=0$:
\begin{equation}\label{XiErrTherm_13}
e_0^* \sim \frac{f'(x)-f'(x_0)}{2(x-x_0)} 
\end{equation}
Combining both (\ref{XiErrTherm_12}) and (\ref{XiErrTherm_13}), we obtain the convenient expression:
\begin{equation}\label{XiErrTherm_14}
e_k^* \sim \frac{1}{2}  \frac{(1+k)!}{(x-x_0)^{k+1}} f'(x) - \frac{1}{2} \frac{(1+k)!}{(x-x_0)^{k+1}} \sum_{m=0}^k \frac{f^{(m+1)}(x_0)}{m!} (x-x_0)^m
\end{equation}
Plugging (\ref{XiErrTherm_14}) into (\ref{XiErrTherm_8}) yields: 
\begin{equation}\label{XiErrTherm_15} 
f(x)-\hat f_n(x) \sim \frac{x-x_0}{2n} \sum_{k=1}^n \chi_n(k) \Big[f'(x)-\sum_{m=0}^{k-1} \frac{f^{(m+1)}(x_0)}{m!} (x-x_0)^m \Big]
\end{equation}
Using Lemma \ref{lemma441}, we arrive at the desired result:
\begin{equation}\label{XiErrTherm_Final} 
f(x)-\hat f_n(x) \sim \frac{f^{(2)}(x)\, (x-x_0)^2}{2n}
\end{equation}
\end{proof}

To test the asymptotic expression given by Theorem \ref{theorem441}, suppose we evaluate the geometric series $f(x) = \sum_{k=0}^\infty x^k$ at $x=-2$ using $n=40$. Then, the error term by direct application of the summability method $\chi$ is 0.0037 (up to four decimal places). The expression in Theorem \ref{theorem441} predicts a value of 0.0037, which is indeed accurate up to 4 decimal places. Similarly, if we apply the summability method $\chi$ to the Taylor series expansion $f(x)=\log{(1+x)} = \sum_{k=1}^\infty \frac{(-1)^{k+1}}{k}$ at $x=3$ using $n=30$, then the error term is -0.0091 (up to four decimal places) whereas Theorem \ref{theorem441} estimates the error term to be -0.0094.

The asymptotic expression for the error term given in Theorem \ref{theorem441} presents an interesting insight. Specifically, we can determine if the summability method converges to a value $f(x)$ from above or from below depending on whether the function $f$ is concave or convex at $x$. This conclusion becomes intuitive if we keep in mind that the summability method $\chi$ uses  first-order linear approximations, in a manner that is quite similar to the Euler method. Thus, at the vicinity of $x$, first order linear approximation overestimates the value of $f(x)$ if $f$ is concave at $x$ and underestimates it if $f$ is convex.

\section{Examples} 
In Section \ref{sec::convergence}, we proved that the summability method $\chi$ \lq\lq correctly'' evaluates the geometric series $\sum_{k=0}^\infty x^k$ in the domain $-\kappa<x<1$, where $\kappa\approx 3.5911$ is the solution to $\kappa\log\kappa -\kappa = 1$. Because $\kappa>1$, the summability method is strictly more powerful than the Abel summation method and all the N\"orlund and the Ces\'aro means. In this section, we present two additional examples that demonstrate how the summability method $\chi$ indeed assigns \lq\lq correct'' values to divergent sums. 

We begin with the following example.

\begin{example}\normalfont
It can be shown using various arguments (see for instance the simple derivations in \citep{alabdulmohsinSummabilityCalculus}) that the following assignments are valid, e.g. in the sense of analytic continuation: 
\begin{equation}\label{example1_exact}
\sum_{k=0}^\infty (-1)^{k} H_{k+1} = \frac{\log 2}{2},\; \sum_{k=0}^\infty (-1)^{k}\,\log (1+k) = \log\sqrt{\frac{2}{\pi}},\; \sum_{k=0}^\infty (-1)^{k}=\frac{1}{2}
\end{equation}
These expressions can be easily verified using the summability method $\chi$, which gives the following values when $n=100$: 
\begin{equation*}
\sum_{k=0}^\infty (-1)^{k} H_{k+1} \approx 0.3476,\quad \sum_{k=0}^\infty (-1)^{k}\,\log (1+k) \approx -0.2261,\quad \sum_{k=0}^\infty (-1)^{k}\approx 0.4987
\end{equation*}
The latter values agree with the exact expressions in (\ref{example1_exact}) up to two decimal places. A higher accuracy can be achieved using a larger value of $n$.

However, a famous result, due to Euler, states that the harmonic numbers $H_n = \sum_{k=1}^n\,(1/k)$ are asymptotically related to $\log{n}$ by the relation $H_n\sim \log n + \gamma$, where $\gamma\approx 0.577$ is the Euler constant. This implies that the alternating series
\begin{equation*}
\sum_{k=0}^\infty (-1)^{k} \big(H_{k+1}-\log (k+1) -\gamma\big)
\end{equation*} 
converges to some value $V\in\mathbb{R}$. It is relatively simple to derive an \emph{approximate} value of $V$ using, for example, the Euler-Maclaurin summation formula. However, regularity and linearity of the summability method $\chi$ can be employed to deduce the \emph{exact} value of the convergent sum. Specifically, we have:
\begin{align*}
V & = \sum_{k=0}^\infty (-1)^{k} \big(H_{k+1}-\log (k+1) -\gamma\big) && \text{\emph{(by definition)}}\\ 
&= \lim_{n\to\infty}\; \sum_{k=0}^n \chi_n(k)\;(-1)^{k} \big(H_{k+1}-\log (1+k) -\gamma\big) && \text{\emph{(by regularity)}}\\ 
&= \Big[\lim_{n\to\infty}\; \sum_{k=0}^n \chi_n(k)\;(-1)^{k} H_{k+1}\Big] \, - \Big[\lim_{n\to\infty}\; \sum_{k=0}^n \chi_n(k)\;&&(-1)^{k} \log(k+1)\Big]\\ &\quad\quad-\gamma\,\Big[\lim_{n\to\infty}\; \sum_{k=0}^n \chi_n(k)\;(-1)^{k}\Big]&& \text{\emph{(by linearity)}}
\end{align*} 
Plugging in the exact values in (\ref{example1_exact}), we deduce that: 
\begin{equation*}
\sum_{k=0}^\infty (-1)^{k} \big(H_{k+1}-\log (k+1) -\gamma\big) = \frac{\log\pi-\gamma}{2}
\end{equation*}
Hence, we used the values (a.k.a. \emph{antilimits}) of divergent series to determine the limit of a convergent series. 
\end{example} 

\begin{example}\normalfont
In our second example, we consider the series $f(x)=\sum_{k=0}^\infty B_k\,x^k$, where $B_k = (1,\frac{1}{2},\frac{1}{6},0,\ldots)$ are the Bernoulli numbers. Here, $f(x)$ diverges everywhere except at the origin. Using the Euler-Maclaurin summation formula, it can be shown that $f(x)$ is an asymptotic series to the function $h(x) = \frac{1}{x}\psi^{\prime\prime}(1/x) + x$ as $x\to 0$, where $\psi(x) = \log (x!)$ is the log-factorial\footnote{In MATLAB, the function is given by the command: \ttfamily{1/x*psi(1,1/x+1)+x}}. Hence, we can indeed contrast the values of the divergent series, when interpreted using the summability method $\chi$, with the values of $h(x)$. However, because $B_k$ increases quite rapidly, the infinite sum $\sum_{k=0}^\infty B_k\,x^k$ is not $\chi$-summable \emph{per se} but it can be approximated using small values of $n$, nevertheless. 

Table \ref{Tab43Bern} lists the values of $\sum_{k=0}^n\,\chi_n(k)\,B_k\,x^k$ for different choices of $n$ and $x$. As shown in the table, the values are indeed close to the \lq\lq correct" values, given by $h(x)=\frac{1}{x}\psi^{\prime\prime}(1/x) + x$, despite the fact that the finite sums $\sum_{k=0}^n\,B_k\,x^k$ bear no resemblance with the generating function $h(x)$. Note, for instance, that if $n=30$, then  $\sum_{k=0}^n\,B_k \approx 5.7\times 10^8$, whereas $\sum_{k=0}^n\,\chi_{n}(k)B_k\approx 1.6425$, where the latter figure is accurate up to two decimal places. This agrees with Euler's conclusion that $\sum_{k=0}^\infty B_k = \zeta_2 = \pi^2/6$  \citep{Peng2002}. 

Most importantly, whereas the summability method $\chi$ is shown to be useful in this example, even when the series $\sum_{k=0}^\infty B_k\,x^k$ diverges quite rapidly, other classical summability methods such as the Abel summation method and the Mittag-Leffler summability method cannot be used in this case. Hence, the summability method $\chi$ can be quite useful in computing the values of divergent series where other methods fail. 

\begin{table} 
\caption {For every chioce of $x$ and $n$, the value in the corresponding cell is that of $\sum_{k=0}^n\,\chi_n(k)\,B_k\,x^k$. The exact value in the last row is that of the generating function $\frac{1}{x}\psi^{\prime\prime}(1/x) + x$, where $\psi(x) = \log (x!)$ is the log-factorial function.}\label{Tab43Bern}
\centering\begin{tabular} { |l|c|c|c|c|c|c|c|}
\hline 
  &  $x=-1$ & $x=-0.7$ & $x=-0.2$& $x=0$ & $x=0.2$ & $x=0.7$ & $x=1$\\[7pt]
\hline
$n=20$  &  0.6407 & 0.7227  & 0.9063 & 1.000 & 1.1063 &  1.4227 & 1.6407\\[7pt]
$n=25$  &  0.6415 & 0.7233  & 0.9064 & 1.000 & 1.1064 &  1.4233 & 1.6415\\[7pt]
$n=30$  &  0.6425 & 0.7236  & 0.9064 & 1.000 & 1.1064 &  1.4236 & 1.6425\\[7pt]
\hline
\textbf{\;Exact}  &  0.6449 & 0.7255  & 0.9066 & 1.000 & 1.1066 & 1.4255 & 1.6449\\[7pt]
\hline
\end{tabular}
\end{table}
\end{example}

\section{Conclusion}
In this paper, a new analytic summability method for divergent series is introduced. We showed how it arose quite naturally in the study of local polynomial approximations of analytic functions, and provided an asymptotic expression to its error term whenever it converged. We also proved that it was both linear and regular, and that it could be interpreted as an averaging method. Finally, we  demonstrated how the proposed summability method could be quite useful where other methods failed, such as for asymptotic series that diverged quite rapidly. 
%
%



\section*{Bibliography}
\bibliographystyle{elsarticle-num-names}      
\bibliography{summ_method}   

%
%

\end{document}